\definecolor{darkblue}{rgb}{0,0,1}
\newcommand{\RR}{\mathbbm{R}}
\newcommand{\CC}{\mathbbm{C}}
\newcommand{\NN}{\mathbbm{N}}
\newtheorem{proposition}{Proposition}%[section] 
\newtheorem{theorem}[proposition]{Theorem}
\theoremstyle{remark}
\theoremstyle{definition}
\newtheorem{remark}[proposition]{Remark}
\newcommand{\bfx}{{\boldsymbol\alpha}}
\newcommand{\bfr}{{\mathbf{r}}}
\newcommand{\bfy}{{\boldsymbol\beta}}
\newcommand{\va}{{\mathbf{a}}}
\begin{document}

\sloppy \raggedbottom

\title{On the characterization of the degree of interpolation polynomials in terms of certain combinatorical matrices}

\begin{start}

	\author{Frank Klinker}{1},\ 
	\author{Christoph Reineke}{2}\\

	\address{ %Fakult\"at f\"ur Mathematik, TU Dortmund, 44221 Dortmund\\[0.5ex]
	\href{mailto:frank.klinker@math.tu-dortmund.de}{frank.klinker@math.tu-dortmund.de} (Corresponding author)\\}{1}

	\address{\href{mailto:christoph_reineke@gmx.de}{christoph\_reineke@gmx.de}\\}{2}

\noindent
\makebox[0.8\textwidth]{%	
\begin{minipage}{0.85\textwidth}
\begin{Abstract}
In this note we show that the degree of the interpolation polynomial for equidistant base points is characterized by the regularity of matrices of combinatorical type. 
	\end{Abstract}
\end{minipage}}

\renewcommand{\dateseparator}{-}
\let\thefootnote\relax\footnote{Faculty of Mathematics, TU Dortmund, 44221 Dortmund}
\let\thefootnote\relax\footnote{Published in {\em Arab.~J.~Math.}  \href{https://doi.org/10.1007/s40065-018-0212-x}{https://doi.org/10.1007/s40065-018-0212-x} (online 2018)} %\ -- \currenttime}

\end{start}	

%\tableofcontents

\runningheads{%
{\small\sf Preprint}\hspace{19.1em}
Determinants and Interpolation
}{% 
{\small\sf Preprint}\hspace*{23.2em}   
F.~Klinker, C.~Reineke
}

\section{Statement of the main result}

Let $A_{s,\va}\in M_{\ell+1}\RR$ be the matrix depending on a vector  $\va=(a_0,\ldots,a_\ell)\in\RR^{\ell+1}$ and an integer $s\geq 0$ and defined by 
\begin{equation}\label{matrixA}
A_{s,\va}:=\begin{pmatrix}
1^{\ell-1} & 2^{\ell-1}&\cdots& (\ell+1)^{\ell-1}\\
(\ell+2)^{\ell-1} & (\ell+3)^{\ell-1}&\cdots& (2\ell+2)^{\ell-1}\\
\vdots &\vdots &&\vdots \\
(\ell^{2})^{\ell-1} & (\ell^{2}+1)^{\ell-1} &\cdots& (\ell^2+\ell)^{\ell-1}\\
0^sa_0 &1^sa_1&\cdots &\ell^sa_\ell
\end{pmatrix}\,.
\end{equation}
Furthermore, denote by $q_\va$ the interpolation polynomial of degree at most $\ell$ defined by the base points $x_0,\ldots,x_\ell$ and the value vector $\va$:
\begin{equation}\label{2}
q_\va(x_i)=a_i \,,\quad i=0,\ldots,\ell\,.
\end{equation}
In this text we show the following remarkable characterization of the degree of the interpolation polynomial for equidistant base points in terms of the matrices $A_{s,\va}$:
\begin{theorem}\label{thm:1}
The interpolation problem $q_\va(x_i)=a_i$ associated to equidistant base points $x_i=\xi+ih, i=0,1,\ldots,\ell$ and the value vector $\va=(a_0,a_1,\ldots,a_\ell)$ yields a polynomial of degree  $\ell-m$ if and only if $\det A_{s,\va}=0$, for $s=0,\ldots,m-1$, and $\det A_{m,\va}\neq 0$.
\end{theorem}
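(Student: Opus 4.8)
The plan is to expand $\det A_{s,\va}$ along its last row, thereby isolating the dependence on $s$ and $\va$ into the scalars $i^sa_i$ while the combinatorial content sits in the cofactors. Writing $T$ for the $\ell\times(\ell+1)$ top block with entries $T_{j,i+1}=\big((j-1)(\ell+1)+i+1\big)^{\ell-1}$, and $\gamma_i=(-1)^{\ell+i}\det T^{(i)}$ for the cofactor of the $(i+1)$-st bottom-row entry ($T^{(i)}$ being $T$ with its $(i+1)$-st column deleted), Laplace expansion yields
\[
\det A_{s,\va}=\sum_{i=0}^{\ell}\gamma_i\,i^s a_i ,
\]
with the convention $0^0=1$; crucially the $\gamma_i$ depend on neither $s$ nor $\va$.

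The heart of the argument is to identify the cofactor vector $\gamma=(\gamma_0,\ldots,\gamma_\ell)$ explicitly. First, replacing the bottom row of $A_{s,\va}$ by any row of $T$ produces a matrix with two equal rows, so its vanishing determinant, expanded again along the bottom row, gives $T\gamma=0$; thus $\gamma\in\ker T$. Second, the vector $v$ with $v_i=(-1)^i\binom{\ell}{i}$ also lies in $\ker T$: for each fixed $j$ the quantity $\sum_{i=0}^\ell(-1)^i\binom{\ell}{i}(b_j+i)^{\ell-1}$, with $b_j=(j-1)(\ell+1)+1$, is, up to sign, the $\ell$-th forward difference of the polynomial $x\mapsto(b_j+x)^{\ell-1}$ of degree $\ell-1$, hence zero. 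I would then establish that $T$ has full row rank $\ell$, so that $\ker T$ is one-dimensional and $\gamma=\lambda v$ for a nonzero scalar $\lambda$, giving the clean formula $\det A_{s,\va}=\lambda\sum_{i=0}^\ell(-1)^i\binom{\ell}{i}i^s a_i$.

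For the rank claim, which I expect to be the main obstacle, suppose $\sum_j c_j R_j=0$ for the rows $R_j$ of $T$. Reading this off column by column says the polynomial $G(x)=\sum_j c_j(b_j+x)^{\ell-1}$ of degree at most $\ell-1$ vanishes at the $\ell+1$ points $x=0,1,\ldots,\ell$, forcing $G\equiv 0$. Comparing coefficients of $x^p$ then turns this into the homogeneous Vandermonde system $\sum_j c_j b_j^{\,p}=0$ for $p=0,\ldots,\ell-1$; since the nodes $b_1=1,\,b_2=\ell+2,\ldots$ are pairwise distinct, all $c_j$ vanish, so $T$ has full rank and in particular $\lambda\neq 0$.

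It remains to convert the linear form $\sum_i(-1)^i\binom{\ell}{i}i^s a_i$ into finite differences and read off the degree. Expanding $i^s=\sum_t S(s,t)\,(i)_t$ in falling factorials and using $\binom{\ell}{i}(i)_t=t!\binom{\ell}{t}\binom{\ell-t}{i-t}$ together with $\sum_i(-1)^i\binom{\ell-t}{i-t}a_i=(-1)^\ell\Delta^{\ell-t}a_t$, I obtain
\[
\det A_{s,\va}=\lambda(-1)^\ell\sum_{t=0}^{s}S(s,t)\,t!\binom{\ell}{t}\,\Delta^{\ell-t}a_t .
\]
By Newton's forward-difference formula $q_\va(\xi+th)=\sum_k\binom{t}{k}\Delta^k a_0$, the degree of $q_\va$ is the largest $k$ with $\Delta^k a_0\neq 0$; equivalently $\deg q_\va=\ell-m$ exactly when every difference of order $>\ell-m$ vanishes (so $\Delta^{\ell-t}a_t=0$ for $t<m$) while the order-$(\ell-m)$ differences form a nonzero constant (so $\Delta^{\ell-m}a_m\neq 0$). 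Substituting into the displayed formula, for $s<m$ every surviving term carries a factor $\Delta^{\ell-t}a_t=0$, whence $\det A_{s,\va}=0$, whereas for $s=m$ only the $t=m$ term remains and $\det A_{m,\va}=\lambda(-1)^\ell m!\binom{\ell}{m}\Delta^{\ell-m}a_m\neq 0$. This proves one implication; the converse is immediate, since the formula shows that the least $s$ with $\det A_{s,\va}\neq 0$ is uniquely determined and equals $\ell-\deg q_\va$.
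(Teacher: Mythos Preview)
Your argument is correct and proceeds by a genuinely different route from the paper. Both approaches begin by expanding $\det A_{s,\va}$ along the last row, but the paper then evaluates each cofactor $\det A^{[\kappa]}$ explicitly as $(-1)^\ell\sigma_\ell\binom{\ell}{\kappa-1}$ via a generalized Vandermonde identity from the authors' earlier work, and afterwards expresses the derivatives $q_\va^{(\ell-s)}(\xi)$ through Lagrange polynomials and elementary symmetric sums to obtain a lower-triangular relation between the determinants and the leading coefficients of $q_\va$. You bypass the explicit cofactor evaluation entirely: the kernel argument (both $\gamma$ and $v_i=(-1)^i\binom{\ell}{i}$ annihilated by the top block, together with the rank bound coming from a Vandermonde system in the shifts $b_j$) yields $\gamma=\lambda v$ with $\lambda\neq0$ for free, which is all Theorem~\ref{thm:1} needs. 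On the interpolation side, you replace the Lagrange/derivative machinery by Newton's forward-difference form and the Stirling expansion $i^s=\sum_t S(s,t)(i)_t$, again producing a triangular system (now with diagonal entries $S(s,s)=1$) between the $\det A_{s,\va}$ and the differences $\Delta^{\ell-t}a_t$. Your approach is more self-contained and elementary; the price is that it does not produce the explicit constant $\sigma_\ell$, though that constant plays no role in the statement being proved.
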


When we were preparing this text we learned much about the relation between combinatorics and binomial identities and the evaluation of special determinants. We cordially refer the reader to the outstanding collections \cite{Kratten,Kratten2} of determinants evaluation. We also refer to the book \cite{Riordan} which presents special combinatorical identities.

\section{The proof of the main Theorem}

Theorem \ref{thm:1} relies on some properties of the matrix $A_{s,\va}$ and the polynomial $q_\va$ that can be obtained independently from each other. 
In this section we state these results and show how they prove Theorem \ref{thm:1}.

The first result is on the determinant of the matrix \eqref{matrixA}: 

\begin{proposition}\label{satz:2}
There is a constant $\sigma_\ell$ only depending on the size of the matrix $A_{s,\va}$ defined by \eqref{matrixA} such that its determinant is given by
\begin{equation}\label{eq:9}
\det(A_{s,\va}) = \sigma_\ell\, \sum_{j=0}^{\ell} (-1)^{j}   \binom{\ell}{j}j^s a_{j}\,.
\end{equation}
\end{proposition}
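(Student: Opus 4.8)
The plan is to expand the determinant along its variable last row and to identify the resulting cofactors, up to a common scalar, with the binomial coefficients $(-1)^j\binom{\ell}{j}$. Write $B$ for the $\ell\times(\ell+1)$ block consisting of the first $\ell$ rows of $A_{s,\va}$; its entry in row $r$ and column $c$ is $\bigl((r-1)(\ell+1)+c\bigr)^{\ell-1}$, so $B$ depends only on $\ell$ and on neither $s$ nor $\va$. Laplace expansion along the last row, whose $c$-th entry is $(c-1)^s a_{c-1}$, gives
\begin{equation*}
\det(A_{s,\va})=\sum_{j=0}^{\ell}(-1)^{\ell+j}\,j^s a_j\,M_{j+1},
\end{equation*}
where $M_c$ denotes the maximal minor of $B$ obtained by deleting its $c$-th column. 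Putting $\kappa_j:=(-1)^{\ell+j}M_{j+1}$, the statement reduces to showing $\kappa_j=\sigma_\ell\,(-1)^j\binom{\ell}{j}$ with a scalar $\sigma_\ell$ independent of $j$ (and hence of $s,\va$, since the $M_c$ come only from $B$).

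First I would show that $\kappa=(\kappa_0,\dots,\kappa_\ell)$ lies in $\ker B$. Replacing the last row of $A_{s,\va}$ by any row $r$ of $B$ produces a matrix with two equal rows, hence determinant $0$; expanding this determinant along the last row gives $\sum_{j}B_{r,j+1}\kappa_j=0$ for every $r$, that is, $B\kappa=0$.

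Next I would produce a second, explicit element of $\ker B$. For fixed $r$ set $N:=(r-1)(\ell+1)+1$; then the $r$-th row of the equation $Bw=0$ for $w:=\bigl((-1)^j\binom{\ell}{j}\bigr)_{j=0}^{\ell}$ reads $\sum_{j=0}^{\ell}(-1)^j\binom{\ell}{j}(N+j)^{\ell-1}=0$, which is $(-1)^\ell$ times the $\ell$-th forward difference $\Delta^{\ell}P(N)$ of $P(x)=x^{\ell-1}$. Since $\deg P=\ell-1<\ell$, this difference vanishes, so $Bw=0$. This finite-difference identity is the one genuinely computational ingredient and is, I expect, the heart of the matter.

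Finally I would conclude by a dimension count. The vector $w$ is nonzero. If $\kappa=0$ then $\kappa=0\cdot w$ trivially; otherwise $\kappa$ and $w$ are nonzero members of $\ker B$, and were they linearly independent we would have $\dim\ker B\ge 2$, forcing $\operatorname{rank}B\le\ell-1$ and hence every minor $M_c$, i.e.\ $\kappa$ itself, to vanish --- contrary to $\kappa\neq 0$. Thus $\kappa=\sigma_\ell w$ for a scalar $\sigma_\ell$ depending only on the size $\ell+1$, and substituting $\kappa_j=\sigma_\ell(-1)^j\binom{\ell}{j}$ into the expansion yields \eqref{eq:9}. I would remark that this argument does not by itself show $\sigma_\ell\neq 0$; that property (which Theorem~\ref{thm:1} ultimately needs) follows from exhibiting a single nonvanishing maximal minor of $B$, establishing $\operatorname{rank}B=\ell$.
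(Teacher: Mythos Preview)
Your argument is correct and takes a genuinely different route from the paper. Both proofs begin with the Laplace expansion along the last row, so the content of the proposition in either case is the evaluation of the minors $M_{j+1}=\det A^{[j+1]}$. The paper proves a separate Proposition (their Proposition~\ref{satz:7}) giving the \emph{explicit} value of each minor: it invokes a formula from the authors' earlier work expressing $\det A^{[\kappa]}$ as a product of Vandermonde-type factors, and then carries out a direct combinatorial simplification of $V_\ell(r_1^{[\kappa]},\dots,r_\ell^{[\kappa]})$ to extract the factor $\binom{\ell}{\kappa-1}$ and an explicit closed form for $\sigma_\ell$. Your approach sidesteps all of that: you observe that the cofactor vector $\kappa$ automatically lies in $\ker B$, exhibit the explicit kernel element $w=\big((-1)^j\binom{\ell}{j}\big)_j$ via the finite-difference identity $\Delta^\ell x^{\ell-1}=0$, and then use rank--nullity to force $\kappa\parallel w$. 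This is shorter, more conceptual, and requires no machinery beyond elementary linear algebra and the basic difference calculus fact. What the paper's approach buys is an explicit nonzero expression for $\sigma_\ell$, so that $\sigma_\ell\neq 0$ is immediate; as you correctly flag, your argument does not distinguish between $\sigma_\ell\neq 0$ and $\sigma_\ell=0$ (both are consistent with $\kappa\in\ker B$), and that nonvanishing is genuinely needed later in the proof of Theorem~\ref{thm:1}. To close that gap one would need to exhibit a single nonzero $\ell\times\ell$ minor of $B$, which is essentially a special case of the paper's computation --- so in the end some determinant evaluation is unavoidable for the full program, even if Proposition~\ref{satz:2} itself does not require it.
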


The second result deals with the derivatives of the interpolation polynomial \eqref{2}:

\begin{proposition}\label{satz:3}
For integers $s\geq 0$ and $0\leq k\leq s$ there are constants $\sigma_{\ell,s,k}$ such that  ($\ell-s$)-th derivative of the interpolation polynomial \eqref{2} for equidistant base points is given by 
\begin{equation}\label{eq:10} 
q_\va^{(\ell-s)}(\xi)= h^{s-\ell} 
\sum_{k=0}^s\sigma_{\ell,s,k} \bigg(\sum_{j=0}^\ell(-1)^{j}\binom{\ell}{j} j^k a_j\bigg)\,.
\end{equation}
\end{proposition}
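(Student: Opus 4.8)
The plan is to work with the Lagrange representation of $q_\va$ after rescaling to integer nodes. Writing $t=(x-\xi)/h$, the substitution $x_i=\xi+ih$ cancels every factor of $h$ from the Lagrange basis, so that
\begin{equation*}
q_\va(\xi+th)=\sum_{j=0}^\ell a_j\prod_{\substack{i=0\\ i\neq j}}^\ell\frac{t-i}{j-i}
=\frac{1}{\ell!}\sum_{j=0}^\ell(-1)^{\ell-j}\binom{\ell}{j}a_j\prod_{\substack{i=0\\ i\neq j}}^\ell(t-i),
\end{equation*}
where I use $\prod_{i\neq j}(j-i)=(-1)^{\ell-j}j!\,(\ell-j)!$ together with $\tfrac{1}{j!(\ell-j)!}=\tfrac{1}{\ell!}\binom{\ell}{j}$. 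Note that the binomial coefficient $\binom{\ell}{j}$ appearing in both \eqref{eq:9} and the target \eqref{eq:10} already emerges at this stage, which is exactly why I prefer the Lagrange form over the Newton forward-difference form. The chain rule gives $q_\va^{(\ell-s)}(\xi)=h^{s-\ell}p^{(\ell-s)}(0)$ with $p(t)=q_\va(\xi+th)$, so it remains to evaluate the $(\ell-s)$-th derivative at $t=0$ of the right-hand side; this also accounts for the prefactor $h^{s-\ell}$. (Here $0\le s\le\ell$, so that the derivative order is nonnegative.)

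Since $p^{(\ell-s)}(0)=(\ell-s)!\,[t^{\ell-s}]p(t)$ and $\prod_{i\neq j}(t-i)$ is monic of degree $\ell$, the coefficient of $t^{\ell-s}$ in $\prod_{i\neq j}(t-i)$ equals $(-1)^s e_s^{(j)}$, where $e_s^{(j)}$ denotes the $s$-th elementary symmetric polynomial of the reduced node set $\{0,1,\ldots,\ell\}\setminus\{j\}$. Substituting and pulling out the $j$-independent constants yields
\begin{equation*}
p^{(\ell-s)}(0)=\frac{(-1)^{\ell+s}(\ell-s)!}{\ell!}\sum_{j=0}^\ell(-1)^j\binom{\ell}{j}a_j\,e_s^{(j)}.
\end{equation*}

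The crux of the argument, and the step I expect to be the main obstacle, is to show that $e_s^{(j)}$ is a polynomial in $j$ of degree at most $s$ whose coefficients are universal constants (independent of $h$ and $\va$). This follows from the leave-one-out recursion $E_s=e_s^{(j)}+j\,e_{s-1}^{(j)}$, where $E_s:=e_s(\{0,1,\ldots,\ell\})$ is the full elementary symmetric function; unrolling the recursion gives the closed form
\begin{equation*}
e_s^{(j)}=\sum_{k=0}^s(-1)^k E_{s-k}\,j^k.
\end{equation*}
Establishing this identity cleanly (and confirming that $E_{s-k}$ depends only on $\ell$) is the one piece of genuine combinatorial content; everything else is bookkeeping.

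Finally I would insert this expansion, interchange the order of the two summations, and recognize the inner sums as the functionals $\sum_{j=0}^\ell(-1)^j\binom{\ell}{j}j^k a_j$ already present in \eqref{eq:9}. This reproduces \eqref{eq:10} with
\begin{equation*}
\sigma_{\ell,s,k}=\frac{(-1)^{\ell+s+k}(\ell-s)!}{\ell!}\,e_{s-k}(\{0,1,\ldots,\ell\}),
\end{equation*}
which manifestly depends only on $\ell,s,k$, completing the proof. I expect the chain-rule reduction and the coefficient extraction to be routine, so the whole argument hinges on the symmetric-function identity for $e_s^{(j)}$ above.
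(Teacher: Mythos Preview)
Your proposal is correct and follows essentially the same route as the paper: the paper introduces symbols $\tau_{\ell,m,j}$ which are precisely your $e_m^{(j)}$ (elementary symmetric polynomials of the reduced node set), proves the same leave-one-out recursion $\tau_{\ell,m,j}=\tau_{\ell,m,0}-j\,\tau_{\ell,m-1,j}$ and unrolls it to your identity $e_s^{(j)}=\sum_{k=0}^s(-1)^kE_{s-k}j^k$, and arrives at the same constants $\sigma_{\ell,s,k}=\frac{(-1)^{\ell-s+k}(\ell-s)!}{\ell!}\tau_{\ell,s-k,0}$. The only differences are cosmetic: the paper packages the combinatorics as a separate proposition and first writes the full polynomial before differentiating, whereas you extract the $t^{\ell-s}$ coefficient directly.
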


Theorem \ref{thm:1} is a corollary of Propositions \ref{satz:2} and \ref{satz:3}:

\begin{proof}[Theorem \ref{thm:1}]
The interpolation polynomial  $q_\va(x)=\sum_{k=0}^\ell b_k(x-\xi)^k$ has degree $\ell-m$ if and only if $b_\ell=\cdots= b_{\ell-m+1}=0$ and $b_{\ell-m}\neq 0$. 
This is equivalent to have vanishing derivatives $q_\va^{(\ell-s)}(\xi)$ at the point $x=\xi$ for exactly $s=0,\ldots,m-1$. 
By \eqref{eq:10} with \eqref{eq:9} this is equivalent to $\det (A_{s,\va})=0$ for  $s=0,\ldots, m-1$ and $\det (A_{m,\va})\neq 0$.
\end{proof}

In the next sections we prove Proposition \ref{satz:2} and Proposition \ref{satz:3}.

\section{On the determinant of $A_{s,\va}$}

\subsection{Preliminaries}

Some submatrices of $A_{s,\va}$ \eqref{matrixA} are of special type and were studied in our previous work \cite{1}. They will be important in the reasoning of Proposition \ref{satz:2}. 

In \cite{1} we considered a positive integer $\ell$ and three sequences of complex numbers
$\bfx=(\alpha_1,\alpha_2,\ldots), \bfy=(\beta_1,\beta_2,\ldots)$, and	$\bfr=(r_1,r_2,\ldots)$. We assume $\bfr$ to be injective, i.e.\ $r_i\neq r_j$ for all $i,j\in\NN$. 
For each $k\in\NN$ these data define a complex $k\times k$-matrix $B:=B(k;\bfx,\bfy ,\bfr,\ell)\in M_k\CC$:
\begin{equation}\label{eq:main2}
B_{ij}=(\alpha_i+r_j\beta_i)^{\ell-1}\,.
\end{equation}
The determinant of this matrix is $\det(B)=0$  for $k>\ell$. For non vanishing  $\alpha_i,\beta_i$ and  
$\rho_i:=\frac{\beta_i}{\alpha_i}$ we obtain
\begin{equation}
\det(B)  = \prod_{i=1}^k \alpha_i^{\ell-1}\!\!\!\!  \sum_{0\leq \mu_1<\ldots<\mu_{k}\leq\ell-1}\, \prod_{j=1}^k{\binom{\ell-1}{\mu_j}} 
	 V_{k,\mu}(r_1,\ldots,r_k)\, V_{k,\mu}\big(\rho_1,\ldots,\rho_k\big)\label{detAneu}
\end{equation}
or
\begin{multline}\label{eq:1}
\hspace*{-0.8cm}\det(B) =\\ \hspace*{-0.8cm}(-1)^{\frac{k(k-1)}{2}}\prod_{i=1}^k \beta_i^{\ell-1} \sum_{0\leq \mu_1<\ldots<\mu_{k}\leq\ell-1}\, \prod_{j=1}^k{\binom{\ell-1}{\mu_j}} 	V_{k,\mu}(r_1,\ldots,r_k)\, V_{k,\mu^\complement}\big(\tfrac{1}{\rho_1},\ldots,\tfrac{1}{\rho_k}\big)
\tag{\ref{detAneu}'}
\end{multline}
for $k\leq \ell$, where 
$\mu$ is a sequence $\mu=(\mu_1,\ldots,\mu_k)$ of strictly increasing natural numbers $0\leq \mu_1< \ldots <\mu_k\leq \ell-1$, 
$\mu^\complement$ is the associated strictly increasing sequence $\mu^\complement=(\ell-1-\mu_k,\ldots,\ell-1-\mu_1)$, and   
\begin{equation}
V_{k,\mu}(\nu_1,\ldots,\nu_k)=\det\big(\nu_i^{\mu_j}\big)
\end{equation}
denotes the generalized Vandermonde determinant, see \cite{Heinemann}. 
In particular, $V_{k,(0,1,\ldots,k-1)}=V_k$ where $V_k$ is the usual Vandermonde determinant with $V_k(\nu_1,\ldots,\nu_k)=\prod\limits_{1\leq i<j\leq k}(\nu_j-\nu_i)$. 
Since $V_{k,\mu}(\nu_1,\ldots\nu_k)$ vanishes whenever $\nu_i=\nu_j$ the quotient ${V_{k,\mu}}/{V_k}$ is a polynomial, too. These symmetric polynomials are the Schur polynomials, see e.g.~\cite{DeMarchi,King,Littlewood}. 

A consequence of this discussion is
\begin{theorem}[{\cite[Theorem 3]{1}}]
Let $\bfx,\bfy$ be sequences of complex numbers with\footnote{In \cite{1} we show that the regularity result remains valid when one of the $\alpha_i$ or $\beta_i$ vanishes.} $\frac{\alpha_i}{\beta_i}\in\RR^+$ and $\alpha_i\beta_j-\beta_i\alpha_j\neq 0$ for all $1\leq i,j\leq k$, and $\bfr$ be an positive injective sequence.
Then the matrix \eqref{eq:main2} is regular if and only if $k\leq \ell$.
\end{theorem}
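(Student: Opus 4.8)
The plan is to split the equivalence into its two implications and to read both off from the explicit determinant formula \eqref{detAneu}. The direction ``regular $\then k\leq\ell$'' is the soft one: each entry $B_{ij}=(\alpha_i+r_j\beta_i)^{\ell-1}$ is, after binomial expansion, a polynomial of degree at most $\ell-1$ in the single variable $r_j$, so $B$ factors as $B=CR$ through the $k\times\ell$ matrix $C$ with $C_{im}=\binom{\ell-1}{m}\alpha_i^{\ell-1-m}\beta_i^{m}$ ($0\leq m\leq\ell-1$) and the $\ell\times k$ matrix $R$ with $R_{mj}=r_j^{m}$. For $k>\ell$ the rank of $B$ is at most $\ell<k$, hence $\det(B)=0$, which is exactly the vanishing already recorded below \eqref{eq:main2}. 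Contrapositively, regularity forces $k\leq\ell$.

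For the substantial direction I would assume $k\leq\ell$ and establish $\det(B)\neq 0$ directly. First I translate the hypotheses into statements about $\rho_i=\frac{\beta_i}{\alpha_i}$: the condition $\frac{\alpha_i}{\beta_i}\in\RR^+$ gives $\alpha_i\neq 0$ and $\rho_i\in\RR^+$, while $\alpha_i\beta_j-\beta_i\alpha_j=\alpha_i\alpha_j(\rho_j-\rho_i)\neq 0$ gives $\rho_i\neq\rho_j$; thus both $(r_1,\ldots,r_k)$ and $(\rho_1,\ldots,\rho_k)$ are sequences of pairwise distinct positive reals. Next I invoke the Schur-polynomial factorization recorded in the preliminaries, writing $V_{k,\mu}=s_{\lambda(\mu)}\,V_k$ for the partition $\lambda(\mu)$ attached to $\mu$, so that the common ordinary Vandermonde factors split off and \eqref{detAneu} becomes
\begin{equation*}
\det(B)=\prod_{i=1}^k\alpha_i^{\ell-1}\cdot V_k(r_1,\ldots,r_k)\,V_k(\rho_1,\ldots,\rho_k)\cdot\sum_{\mu}\prod_{j=1}^k\binom{\ell-1}{\mu_j}\,s_{\lambda(\mu)}(r_1,\ldots,r_k)\,s_{\lambda(\mu)}(\rho_1,\ldots,\rho_k).
\end{equation*}

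It then remains to argue that none of the three factors vanishes. The prefactor $\prod_i\alpha_i^{\ell-1}$ is a nonzero complex number, and the two ordinary Vandermonde determinants $V_k(r)$ and $V_k(\rho)$ are nonzero precisely because the $r_i$ and the $\rho_i$ are respectively distinct. The heart of the matter, and the step I expect to be the real obstacle, is the strict positivity of the remaining sum: each $\binom{\ell-1}{\mu_j}$ is positive, and each Schur polynomial $s_{\lambda(\mu)}$ is a nonzero symmetric polynomial with nonnegative integer coefficients (since $\lambda(\mu)$ has at most $k$ parts), so it takes a strictly positive value at the positive arguments $r_i$ and $\rho_i$, independently of their order. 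Because $k\leq\ell$ the index set for $\mu$ is nonempty, whence the sum is a nonempty sum of strictly positive terms and is therefore positive; multiplying the three nonzero factors gives $\det(B)\neq 0$. The only gap left is the degenerate case in which some $\alpha_i$ or $\beta_i$ vanishes, where pulling out $\prod_i\alpha_i^{\ell-1}$ is illegitimate; there I would instead run the identical argument starting from the companion formula \eqref{eq:1}, which carries $\prod_i\beta_i^{\ell-1}$ in front, or recover the statement by a limiting argument, as indicated in the footnote.
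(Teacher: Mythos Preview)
Your proposal is correct and follows precisely the approach the paper indicates: the theorem is quoted from \cite{1} and in this paper is presented only as ``a consequence of this discussion,'' the discussion being the determinant formula \eqref{detAneu} together with the remark that the quotients $V_{k,\mu}/V_k$ are Schur polynomials. You have correctly unpacked that sketch---factorization $B=CR$ for the easy direction, and for $k\leq\ell$ the splitting off of the two nonzero ordinary Vandermonde factors so that what remains is a nonempty sum of strictly positive terms (binomials times Schur values at positive reals)---so there is nothing to add or correct.
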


\subsection{The matrix $A_{s,\va}$ and its determinant}

Let us consider the matrix $A_{s,\va}\in M_{\ell+1}\RR$ defined by \eqref{matrixA}. Its components are given by\footnote{We use the convention $0^0=1$ to cover all cases by this formula.} 
\begin{equation*}\label{matrixA'}\begin{aligned}
&(A_{s,\va})_{ij} =\begin{cases}
\big(\alpha_i+r_j\beta_i \big)^{\ell-1} &\text{ for }\ 1\leq i\leq \ell, 1\leq j\leq \ell+1\\
(j-1)^sa_{j-1} & \text{ for }\ i=\ell+1, 1\leq j\leq \ell+1
\end{cases}\\[2ex]
& \text{with }\  \alpha_i:=(i-1)(\ell+1),\quad \beta_i=1,\quad  r_j=j \,.
\end{aligned}
\end{equation*}
Furthermore, denote by $A^{[\kappa]}\in M_\ell\RR$ the matrix obtained from $A_{s,\va}$ by removing the last row and the $\kappa$-th column. Of course, it does not depend on $s$ and $\va$ and is given by 
\begin{equation}
\begin{aligned}
&(A^{[\kappa]})_{ij} =\big(\alpha_i+r^{[\kappa]}_j\beta_i\big)^{\ell-1}\\[2ex]
& \text{ with }\   
r^{[\kappa]}_j=
\begin{cases}
j &\text{ for }\ 1\leq j\leq \kappa-1\,,\\ 
j+1&\text{ for }\  \kappa\leq j\leq \ell\,.
\end{cases}
\end{aligned}
\end{equation}
The matrices $A^{[\kappa]}$ are of the form \eqref{eq:main2} and have the following property:
\begin{proposition}\label{satz:7}
There exists a number $\sigma_\ell$ depending only on $\ell$ such that 
\begin{equation}
\det(A^{[\kappa]})=(-1)^\ell\sigma_\ell \,\binom{\ell}{\kappa-1} \,.
\end{equation}
\end{proposition}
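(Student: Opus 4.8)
The plan is to recognize $A^{[\kappa]}$ as an instance of the matrix \eqref{eq:main2} with $k=\ell$ and to evaluate its determinant through the formula established in \cite{1}, after which a single Vandermonde factor will carry all of the $\kappa$-dependence. First I would note that $A^{[\kappa]}$ has exactly the shape \eqref{eq:main2}, with $k=\ell$, $\alpha_i=(i-1)(\ell+1)$, $\beta_i=1$, and nodes $r^{[\kappa]}_j$ running through $\{1,\dots,\ell+1\}\setminus\{\kappa\}$. Since $\alpha_1=0$ while every $\beta_i=1\neq 0$, the form to use is the $\beta$-version \eqref{eq:1} rather than \eqref{detAneu}; equivalently, expanding each entry by the binomial theorem factors $A^{[\kappa]}=CD$ with $C_{i\mu}=\binom{\ell-1}{\mu}\alpha_i^{\ell-1-\mu}$ and $D_{\mu j}=(r^{[\kappa]}_j)^\mu$ for $0\le\mu\le\ell-1$, so that multiplicativity of the determinant applies directly and no difficulty arises from $\alpha_1=0$.

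Because the matrix size already equals $\ell$, the sum in \eqref{eq:1} collapses to a single term: the only strictly increasing sequence $0\le\mu_1<\dots<\mu_\ell\le\ell-1$ is $\mu=(0,1,\dots,\ell-1)$, for which $\mu^\complement=\mu$ and $V_{\ell,\mu}=V_\ell$ is the ordinary Vandermonde determinant. Hence \eqref{eq:1} reduces to
\[
\det(A^{[\kappa]})=(-1)^{\ell(\ell-1)/2}\Big(\prod_{\mu=0}^{\ell-1}\binom{\ell-1}{\mu}\Big)\,V_\ell(\alpha_1,\dots,\alpha_\ell)\,V_\ell\big(r^{[\kappa]}_1,\dots,r^{[\kappa]}_\ell\big)\,.
\]
The prefactor and $V_\ell(\alpha_1,\dots,\alpha_\ell)=(\ell+1)^{\binom{\ell}{2}}\prod_{1\le i<j\le\ell}(j-i)$---a nonzero number, since the $\alpha_i$ are distinct---depend only on $\ell$; all dependence on $\kappa$ sits in the single factor $V_\ell(r^{[\kappa]})$.

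The key step, and the only real computation, is to evaluate the Vandermonde $V_\ell(r^{[\kappa]})$ of the gapped node set $\{1,\dots,\ell+1\}\setminus\{\kappa\}$. Beginning with the superfactorial value $\prod_{1\le i<j\le\ell+1}(j-i)=\prod_{p=1}^{\ell}p!$ for the full set $\{1,\dots,\ell+1\}$, deleting the node $\kappa$ removes exactly the factors $|\kappa-p|$ with $p\neq\kappa$, whose product is $(\kappa-1)!\,(\ell+1-\kappa)!$. Therefore
\[
V_\ell\big(r^{[\kappa]}\big)=\frac{\prod_{p=1}^{\ell}p!}{(\kappa-1)!\,(\ell+1-\kappa)!}=\Big(\prod_{p=1}^{\ell-1}p!\Big)\binom{\ell}{\kappa-1}\,,
\]
using $\binom{\ell}{\kappa-1}=\ell!/\big((\kappa-1)!\,(\ell+1-\kappa)!\big)$. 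Substituting this back, every factor other than $\binom{\ell}{\kappa-1}$ is independent of $\kappa$, so collecting them---together with a sign $(-1)^\ell$ pulled out to match the normalization of $\sigma_\ell$ that will make Proposition \ref{satz:2} come out cleanly---into one constant $\sigma_\ell$ gives $\det(A^{[\kappa]})=(-1)^\ell\sigma_\ell\binom{\ell}{\kappa-1}$. I expect the main obstacle to be purely bookkeeping: correctly isolating the $\kappa$-dependent Vandermonde and performing the gapped-superfactorial evaluation; once the binomial coefficient appears, the remaining factors are automatically $\kappa$-free.
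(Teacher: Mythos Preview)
Your proof is correct and follows the same overall strategy as the paper: apply the determinant formula \eqref{eq:1} from \cite{1} with $k=\ell$, so that the sum collapses to the single index $\mu=(0,1,\dots,\ell-1)$ and the determinant factors into a $\kappa$-independent piece times the Vandermonde of the gapped node set $\{1,\dots,\ell+1\}\setminus\{\kappa\}$.

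The one genuine difference is in how that gapped Vandermonde is evaluated. The paper computes $\prod_{i_1<i_2}(r^{[\kappa]}_{i_2}-r^{[\kappa]}_{i_1})$ by splitting into three ranges, counting multiplicities $\gamma_p$ of each factor value, and then regrouping factorials---a page of bookkeeping. Your argument instead starts from the full superfactorial $V_{\ell+1}(1,\dots,\ell+1)=\prod_{p=1}^{\ell}p!$ and divides out exactly the factors $|\kappa-p|$, $p\neq\kappa$, lost when the node $\kappa$ is removed, namely $(\kappa-1)!\,(\ell+1-\kappa)!$; the binomial coefficient then falls out immediately. This is shorter and more transparent, and it makes the appearance of $\binom{\ell}{\kappa-1}$ conceptually obvious rather than the outcome of a regrouping. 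Your explicit $CD$-factorization remark also handles the $\alpha_1=0$ technicality more cleanly than simply invoking \eqref{eq:1}, which as stated presupposes all $\alpha_i\neq 0$; the paper relies on this tacitly.
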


An immediate corollary is Proposition \ref{satz:2}:

\begin{proof}[Proposition \ref{satz:2}]
Expanding the determinant of $A_{s,\va}$ with respect to the last row yields
\begin{align*}
\det (A_{s,\va}) &= \sum_{j=1}^{\ell+1}(-1)^{\ell+1+j} A_{\ell+1,j}\det A^{[j]}\\
%&= (-1)^{\ell}\sum_{j=1}^{\ell+1}(-1)^{j-1} (j-1)^sa_{j-1} \det A^{[j]}\\
&= (-1)^{\ell}\sum_{j=0}^{\ell}(-1)^{j} j^sa_{j} \det A^{[j+1]}\\
&=  \sigma_\ell \sum_{j=0}^{\ell} (-1)^{j}  \binom{\ell}{j} j^s a_{j} \,.
\end{align*}
\end{proof}

In the sequel we prove Proposition \ref{satz:7}:

\begin{proof}[Proposition \ref{satz:7}] 
We use \eqref{eq:1} for $k=\ell$ and note that only one summand is left, namely the one with $\mu=\mu^\complement=(0,1,\ldots\ell-1)$. In this case we have  $V_{\ell,\mu}=V_{\ell,\mu^\complement}=V_\ell$ and, therefore, 
\begin{align*}
\hspace*{-95cm}\det(A^{[\kappa]}) 
=\ & (-1)^{\frac{\ell(\ell-1)}{2}} 
	\prod_{i=1}^\ell \beta_i^{\ell-1} 
	\prod_{j=1}^\ell \binom{\ell-1}{j-1}  
	V_\ell\big(\tfrac{\alpha_1}{\beta_1},\ldots,\tfrac{\alpha_\ell}{\beta_\ell}\big)
	V_\ell(r_1^{[\kappa]},\ldots,r^{[\kappa]}_\ell)\\
=\ &(-1)^{\frac{\ell(\ell-1)}{2}}
	\prod_{j=0}^{\ell-1} \binom{\ell-1}{j} 
	\prod_{1\leq j_1<j_2\leq \ell}\!\!(\alpha_{j_2}-\alpha_{j_1})
	\prod_{1\leq i_1<i_2\leq \ell} \!\!(r_{i_2}^{[\kappa]}-r^{[\kappa]}_{i_1})\\
=\ &(-1)^{\frac{\ell(\ell-1)}{2}}(\ell+1)^{\frac{\ell(\ell-1)}{2}}
	\prod_{j=0}^{\ell-1} \binom{\ell-1}{j}\prod_{1\leq j_1<j_2\leq \ell}(j_2-j_1) %\,\cdot \\
	%&\ \ \cdot
	\prod_{1\leq i_1<i_2\leq \ell} (r_{i_2}^{[\kappa]}-r^{[\kappa]}_{i_1})  \\
=\ &(-1)^{\frac{\ell(\ell-1)}{2}} 
	(\ell+1)^{\frac{\ell(\ell-1)}{2}}
	\prod_{j=0}^{\ell-1} \binom{\ell-1}{j} \prod_{j=1}^{\ell-1}j!
	\prod_{1\leq i_1<i_2\leq \ell} (r_{i_2}^{[\kappa]}-r^{[\kappa]}_{i_1})\,.
\end{align*}
The proof is completed if we show that the quotient of 
$
\prod\limits_{1\leq i_1<i_2\leq \ell} (r_{i_2}^{[\kappa]}-r^{[\kappa]}_{i_1})$ and $ \binom{\ell}{\kappa-1}$ only depends  on $\ell$.
For $j>i$ we have 
\[
r_j^{[{\kappa}]}-r_i^{[\kappa]} 
=\begin{cases}
j-i &\text{ if } 1\leq i<j\leq\kappa-1\text{ or } \kappa\leq i<j\leq\ell\\
j-i+1 &\text{ if } 1\leq i\leq \kappa-1, \kappa\leq j\leq\ell\,,
\end{cases}
\]
and, therefore, 
\begin{align*}
\prod_{1\leq i_1<i_2\leq \ell}\!\! (r_{i_2}^{[\kappa]}-r^{[\kappa]}_{i_1})
&=\Bigg(\prod_{j=1}^{\kappa-1}\prod_{i=1}^{j-1}(j-i)\Bigg)\Bigg(\prod_{j=\kappa}^{\ell}\prod_{i=\kappa}^{j-1}(j-i)\Bigg)\Bigg(\prod_{j=\kappa}^{\ell}\prod_{i=1}^{\kappa-1}(j-i+1)\Bigg)\\
&= \prod_{i_1=1}^{\kappa-2}i_1! \, \prod_{i_2=1}^{\ell-\kappa}i_2! \, \prod_{j=\kappa}^{\ell}\prod_{i=1}^{\kappa-1}(j-i+1)\\
&= \prod_{i_1=1}^{\kappa-2}i_1! \, \prod_{i_2=1}^{\ell-\kappa}i_2! \, \prod_{m=1}^{\ell-\kappa+1}\prod_{n=1}^{\kappa-1}(m+n)\\
&= \prod_{i_1=1}^{\kappa-2}i_1! \, \prod_{i_2=1}^{\ell-\kappa}i_2! \, \prod_{p=2}^{\ell}p^{\gamma_{p}}\,.
\end{align*}
Here $\gamma_p$ denotes the number of factors with value $p$ in the third product, i.e.\
\[
\gamma_p=
\begin{cases}
p-1 & 2\leq p\leq \kappa\,,\\
\kappa-1 & \kappa+1\leq p\leq \ell-\kappa+1\,,\\
\ell-p+1 &\ell-\kappa+2\leq p\leq \ell\,.
\end{cases}
\]
Rearranging the factors, we obtain
\begin{align*}
\prod_{1\leq i_1<i_2\leq \ell}\!\! (r_{i_2}^{[\kappa]}-r^{[\kappa]}_{i_1})
&= \prod_{i_1=1}^{\kappa-2}i_1! \, \prod_{i_2=1}^{\ell-\kappa}i_2! \, \prod_{q=1}^{\kappa-1} \frac{(\ell-q+1)!}{q!}\\
&= \prod_{i_1=1}^{\kappa-2}i_1! \,\prod_{i_3=1}^{\kappa-1}\frac{1}{i_3!} \, \prod_{i_2=1}^{\ell-\kappa}i_2! \cdot \prod_{i_4=\ell-\kappa+2}^\ell i_4! \\
&= \frac{1}{(\kappa-1)!} \, \prod_{i_2=1}^{\ell-\kappa}i_2!\,\prod_{i_4=\ell-\kappa+2}^\ell i_4! \\
&= \frac{1}{(\kappa-1)!} \, \frac{\ell!}{(\ell-\kappa+1)!}\,\prod_{j=1}^{\ell-1} j! \\
&= \binom{\ell}{\kappa-1}\prod_{j=1}^{\ell-1} j!\,.
\end{align*}
We write 
\[
\sigma_\ell:=(-1)^{\frac{\ell(\ell+1)}{2}} 
	(\ell+1)^{\frac{\ell(\ell-1)}{2}}
	\prod_{j=0}^{\ell-1} \binom{\ell-1}{j} \prod_{j=1}^{\ell-1}(j!)^2
\]
and finally achieve 
\[
\det(A^{[\kappa]}) = (-1)^\ell \sigma_\ell\binom{\ell}{\kappa-1}\,.
\]
\end{proof}

\begin{remark}
The proof of Proposition 3 can also be performed by reducing the determinant at hand to a matrix with polynomial entries 
\[
p^{[\kappa]}_j(i)  = a^{[\kappa]}_j i^{j-1} +\text{l.o.t.} 
\qquad\text{ or }\qquad
p_i(r_j^{[\kappa]})  = a_i (r_j^{[\kappa]})^{i-1}+\text{l.o.t.}\,.
\]
Then applying the determinant evaluation as known from the Vandermonde matrix one can show that the determinant is given by 
\[
\Big(\prod_{j=1}^{\ell-1} a_j^{[\kappa]}\Big)\, V_\ell(1,\ldots,\ell)
\qquad\text{ or }\qquad
\Big(\prod_{i=1}^{\ell-1} a_i \Big)\,V_\ell(r_1^{[\kappa]},\ldots,r_\ell^{[\kappa]})\,,
\] 
see \cite[Propostion 1]{Kratten}.
However, in both cases the explicit calculations above remain to be done.
\end{remark}

\section{On the interpolant $q_\va$}

In this section we study the interpolation polynomial $q_\va$ \eqref{2} associated to equidistant base points and a value vector $\va$, see e.g.\ \cite{numeric,numeric2}. 

Explicitly, we have 
\begin{equation*}
q_\va(x)=\sum_{j=0}^{\ell} a_j L_j(x)
\end{equation*}
where $L_j$ denotes the  $j$-th Lagrange polynomial associated to the base points $x_0=\xi,x_1=\xi+ h,\ldots, x_\ell=\xi+\ell h$. For 
$j=0,\ldots ,\ell$ the latter are given by 
\begin{equation*}\begin{aligned}
L_j(x)
&=\frac{\prod_{i=0,i\neq j}^\ell (x-x_i)}{\prod_{i=0,i\neq j}^\ell (x_j-x_i)}\,.
\end{aligned}\end{equation*}

The proof of Proposition \ref{satz:3} will be presented at the end of this section and it makes use of a combinatorical fact on the polynomial
\begin{equation}\label{eq:K}
K(t)=\prod_{i=0}^\ell (t-i)\,.
\end{equation}
To formulate this fact, we introduce the symbols
\begin{equation*}
\tau_{\ell,m,j}=
\begin{cases}
\displaystyle 
\qquad 1 & \text{ for }m= 0\,,\\[1ex]
\displaystyle 
\sum_{1\leq i_1<\ldots<i_m\leq \ell}  i_1\cdots i_m &\text{ for }0<m\leq \ell,j= 0\,,\\[3ex]
\displaystyle 
\sum_{\overset{\scriptstyle 1\leq i_1<\ldots<i_m\leq \ell}{i_1,\ldots,i_m\neq j}}  i_1\cdots i_m &\text{ for }0<m\leq \ell-1,j>0\,,\\[3ex]
\qquad 0 & \text{ for }m= \ell,j>0\,.
\end{cases}
\end{equation*}
For instance, $\tau_{\ell,1,j}=\frac{\ell(\ell+1)}{2}-j$ for $0\leq j\leq \ell$ and $\tau_{\ell,\ell,0}=\ell!$, $\tau_{\ell,\ell-1,j} =\frac{\ell!}{j}$ for $1\leq j\leq \ell$. 
\begin{proposition}\label{prop:6}
For $0\leq j\leq \ell$ the polynomial $K(t)$ of degree $\ell+1$ obeys
\begin{equation}\label{eq:cam}
K(t) = ({t-j})\sum_{m=0}^\ell (-1)^m \tau_{\ell,m,j} t^{\ell-m}\,.
\end{equation}
Furthermore, for $j>0$ and $m\neq \ell$ the symbols $\tau_{\ell,m,j}$ obey
\begin{equation}\label{tau}
\tau_{\ell,m,j} = \sum_{k=0}^m (-1)^k\tau_{\ell,m-k,0} j^k\,.
\end{equation}
\end{proposition}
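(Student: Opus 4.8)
The plan is to recognize the symbols $\tau_{\ell,m,j}$ as elementary symmetric polynomials and then read off both identities, \eqref{eq:cam} from Vieta's formulas and \eqref{tau} from a one-step recursion. Concretely, $\tau_{\ell,m,0}$ is the $m$-th elementary symmetric polynomial $e_m(1,2,\ldots,\ell)$ of the integers $1,\ldots,\ell$, while for $j>0$ the symbol $\tau_{\ell,m,j}$ is the same $e_m$ evaluated on the set $\{1,\ldots,\ell\}\setminus\{j\}$ with the index $j$ deleted. The boundary conventions ($\tau_{\ell,0,j}=1$ and $\tau_{\ell,\ell,j}=0$ for $j>0$) are exactly the ones forced by this interpretation, so no separate treatment of those cases is needed beyond checking them once.

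For \eqref{eq:cam} I would factor $K(t)=(t-j)\,Q_j(t)$, where $Q_j(t)=\prod_{i=0,\,i\neq j}^{\ell}(t-i)$ is the monic polynomial of degree $\ell$ whose roots are $\{0,1,\ldots,\ell\}\setminus\{j\}$. Expanding $Q_j$ by Vieta gives $Q_j(t)=\sum_{m=0}^\ell (-1)^m e_m(\text{roots})\,t^{\ell-m}$. For $j=0$ the roots are $\{1,\ldots,\ell\}$, so $e_m=\tau_{\ell,m,0}$ and the claim is immediate. For $j>0$ the root $0$ is present among the roots of $Q_j$, and since every monomial of $e_m$ that contains the factor $0$ vanishes, one has $e_m\big(\{0\}\cup(\{1,\ldots,\ell\}\setminus\{j\})\big)=e_m(\{1,\ldots,\ell\}\setminus\{j\})=\tau_{\ell,m,j}$. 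In both cases the coefficients of $Q_j$ are precisely the $(-1)^m\tau_{\ell,m,j}$, which is exactly \eqref{eq:cam}.

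For \eqref{tau} I would split the $m$-element subsets of $\{1,\ldots,\ell\}$ according to whether they contain the index $j$: those not containing $j$ contribute $\tau_{\ell,m,j}$, and those containing $j$ contribute $j\,\tau_{\ell,m-1,j}$ after factoring out the factor $j$. This yields the recursion $\tau_{\ell,m,0}=\tau_{\ell,m,j}+j\,\tau_{\ell,m-1,j}$ valid for $1\le m\le \ell-1$. Solving for $\tau_{\ell,m,j}$ and iterating, with base case $\tau_{\ell,0,j}=1$, gives the alternating sum $\tau_{\ell,m,j}=\sum_{k=0}^m(-1)^k j^k\,\tau_{\ell,m-k,0}$ by a short induction on $m$. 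Equivalently, one may divide the generating identity $\prod_{i=1}^\ell(1+it)=\sum_m\tau_{\ell,m,0}\,t^m$ by $(1+jt)$ and use $(1+jt)^{-1}=\sum_k(-1)^k j^k t^k$, reading off \eqref{tau} as the Cauchy product.

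The computations are routine, and I expect no genuine obstacle. The only points requiring care are the bookkeeping of the boundary cases $m=0$ and $m=\ell$ in the definition of $\tau_{\ell,m,j}$, and, in \eqref{eq:cam}, the observation that for $j>0$ the extra root $0$ of $Q_j$ drops silently out of every elementary symmetric polynomial, which is what makes the coefficients collapse onto the $j$-deleted symbols $\tau_{\ell,m,j}$.
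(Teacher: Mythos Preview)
Your proposal is correct and follows essentially the same route as the paper: both arguments expand $K(t)/(t-j)=\prod_{i\neq j}(t-i)$ via Vieta to obtain \eqref{eq:cam}, and both establish \eqref{tau} by deriving the one-step recursion $\tau_{\ell,m,j}=\tau_{\ell,m,0}-j\,\tau_{\ell,m-1,j}$ from the split of subsets according to membership of $j$, followed by induction on $m$. Your write-up is somewhat more explicit in naming the $\tau$'s as elementary symmetric polynomials and in isolating why the extra root $0$ is harmless when $j>0$, and your generating-function remark is a pleasant alternative, but the underlying argument is the same as the paper's.
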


\begin{proof}
The first part is obtained by expanding of $\frac{K(t)}{t-j}=\prod\limits_{\overset{i=0}{i\neq j}}^\ell(t-i)$.
The second part is obviously true for $j=0$. For $j> 0$ we note
\begin{align*}
\tau_{\ell,m,j}=\ & 
\sum_{\overset{\scriptstyle 1\leq i_1<\ldots<i_m\leq \ell}{i_1,\ldots,i_m\neq j}}  i_1\cdots i_m\\
=\ &\sum_{\overset{\scriptstyle 1\leq i_1<\ldots<i_m\leq \ell}{}}  i_1\cdots i_m
- j\sum_{k=1}^m
\sum_{\overset{\scriptstyle 1\leq i_1<\ldots<i_{m}\leq\ell}{i_k=j}}  i_1\cdots i_{k-1}i_{k+1} \cdots i_m\\
=\ &\tau_{\ell,m,0}
- j\sum_{k=1}^{m}
\sum_{\overset{\scriptstyle 1\leq i_1<\ldots<i_{k-1}<j}{j< i_{k+1}<\ldots<i_{m}\leq\ell}}  i_1\cdots i_{k-1}i_{k+1} \cdots i_m\\
=\ &\tau_{\ell,m,0}
- j\sum_{k=1}^{m}
\sum_{\overset{\scriptstyle 1\leq \bar i_1<\ldots<\bar i_{k-1}<j}{j< \bar i_{k}<\ldots<\bar i_{m-1}\leq\ell}}  \bar i_1 \cdots \bar i_{m-1}\\
=\ &\tau_{\ell,m,0}
-j\sum_{\overset{\scriptstyle 1\leq \bar i_1<\ldots<\bar i_{m-1}\leq \ell}{\bar i_1,\ldots,\bar i_{m-1}\neq j}}  \bar i_1 \cdots \bar i_{m-1}\\
 =\ &\tau_{\ell,m,0} - j\tau_{\ell,m-1,j}\,.
\end{align*}
Using this recurrence, \eqref{tau} holds by induction.
\end{proof}

\begin{proof}[Proposition \ref{satz:3}]

We define $x=\xi+th$, $\hat L_j(t):= L_j(\xi+th)$, and $\hat q_\va(t):=q_\va(\xi+th)$, that is 
\begin{align*}
\hat L_j(t)
&=\frac{\prod_{i=0,i\neq j}^\ell h(t-i)}{\prod_{i=0,i\neq j}^\ell h(j-i)}
=\frac{\prod_{i=0,i\neq j}^\ell (t-i)}{\prod_{i=0,i\neq j}^\ell (j-i)} =\frac{(-1)^{\ell-j}}{\ell!}\binom{\ell}{j}\, \frac{K(t)}{t-j}\,
\end{align*}
and
\begin{equation*}
\hat q_\va(t)=\sum_{j=0}^\ell  a_j \hat L_j(t)=\frac{(-1)^{\ell}}{\ell!} \sum_{j=0}^\ell (-1)^j\binom{\ell}{j}a_j  \frac{K(t)}{t-j}
\end{equation*}
with
$K(t)$ defined in \eqref{eq:K}.  
By Proposition \ref{prop:6}  we have
\begin{align}\label{eq:14}
\hat q_\va(t)=\ &
\frac{(-1)^{\ell}}{\ell!} 
\sum_{m=0}^\ell\sum_{k=0}^m (-1)^{k+m}\tau_{\ell,m-k,0} \bigg(\sum_{j=0}^\ell(-1)^{j}\binom{\ell}{j} j^k a_j\bigg) 
 t^{\ell-m}
\end{align}
and
\begin{multline*}
\hspace*{-0.8cm}\hat q^{(\ell-s)}_\va(t) \\
\hspace*{-0.8cm} =  \frac{(-1)^{\ell}(\ell-s)!}{\ell!} 
\sum_{m=0}^s \binom{\ell-m}{s-m} \sum_{k=0}^m (-1)^{k+m} \tau_{\ell,m-k,0} \bigg(\sum_{j=0}^\ell(-1)^{j}\binom{\ell}{j} j^k a_j\bigg)
t^{s-m}\,.
\end{multline*}
In particular, we obtain  at $t=0$ 
\begin{align*}
\frac{(-1)^{\ell-s}\ell!}{(\ell-s)!} \hat q^{(\ell-s)}_\va(0)=\ & \sum_{k=0}^s (-1)^{k}\tau_{\ell,s-k,0} \bigg(\sum_{j=0}^\ell(-1)^{j}\binom{\ell}{j} j^k a_j\bigg)\,.
\end{align*}
For instance, for the first values of $s$ the right hand side of this expression is:
\begin{align*}
\hspace*{-0.3cm} s=0: &\ 
\tau_{\ell,0,0}\sum_{j=0}^\ell(-1)^{j}\binom{\ell}{j} a_j\,,
\\
\hspace*{-0.3cm} s=1: &\ 
\tau_{\ell,1,0} \sum_{j=0}^\ell(-1)^{j}\binom{\ell}{j}  a_j
-\tau_{\ell,0,0} \sum_{j=0}^\ell(-1)^{j}\binom{\ell}{j} j a_j\,,
\\
\hspace*{-0.3cm} s=2: &\  
\tau_{\ell,2,0} \sum_{j=0}^\ell(-1)^{j}\binom{\ell}{j} a_j
-\tau_{\ell,1,0} \sum_{j=0}^\ell(-1)^{j}\binom{\ell}{j} j a_j
+\tau_{\ell,0,0} \sum_{j=0}^\ell(-1)^{j}\binom{\ell}{j} j^2 a_j\,.
\end{align*}
Writing
\[
\sigma_{\ell,s,k}:= \frac{(-1)^{\ell-s+k}(\ell-s)!}{\ell!}\tau_{\ell,s-k,0}
\]
and using $\hat q_\va^{(\ell-s)} (\xi+th) = h^{\ell-s}q_\va^{(\ell-s)}(t)$ we obtain \eqref{eq:10}.
\end{proof}

\section{A final remark on the general interpolation problem}

\begin{remark}
In the  case of arbitrary base points $x_0,\ldots,x_\ell$ one can introduce 
modified symbols $\tau_{\ell,m,j}=\sum\limits_{\overset{\scriptstyle 1\leq i_1<\ldots<i_m\leq \ell}{i_1,\ldots,i_m\neq j}}  x_{i_1}\cdots x_{i_m}$
to get an analogous of \eqref{eq:cam}.

In \cite{Camargo} these symbols are denoted by $\widehat{e}_{\ell-m,j}$ and are used to present representations of the Schur functions.
In our setting, they can be used to get an expansion like \eqref{tau}, too, and a formula of the form  \eqref{eq:14} can also be obtained:
\[
 q_\va(x)=\ 
\sum_{m=0}^\ell\sum_{k=0}^m (-1)^{k+m}\tau_{\ell,m-k,0} \bigg(\sum_{j=0}^\ell \lambda_j x_j^k a_j\bigg) 
 x^{\ell-m}
\]
with $\lambda_j^{-1}=\prod\limits_{\overset{\scriptstyle i=0}{i\neq j}}^\ell(x_i-x_j)$. However, a result as in Theorem \ref{thm:1} is not obtained in such a nice form.
\end{remark}

%\begin{acknowledgements}
%We thank the anonymous referee for his comments that helped to improve the readability of our %text.
%\end{acknowledgements}

\end{document}